\documentclass{article}
\usepackage{amssymb}
\usepackage{amsmath}
\usepackage{geometry}
\usepackage{lineno}

\setcounter{MaxMatrixCols}{10}

\newtheorem{theorem}{Theorem}[section]

\newtheorem{corollary}[theorem]{Corollary}

\newtheorem{example}[theorem]{Example}

\newtheorem{lemma}[theorem]{Lemma}

\newtheorem{remark}[theorem]{Remark}

\newenvironment{proof}[1][Proof]{\noindent\textbf{#1.} }{\ \rule{0.5em}{0.5em}}
\geometry{left=3 cm,right=2 cm,top=3 cm,bottom=2cm}

\input{tcilatex}
\begin{document}

\begin{center}
\textbf{Boundedness\ of\ Fractional\ Integral\ operators\ and their\
commutators in vanishing\ generalized weighted\ Morrey\ spaces}

\bigskip

Bilal \c{C}eki\c{c}$^{a,}\footnote{%
Corresponding author. E-mail address: bilalc@dicle.edu.tr}$ and Ay\c{s}eg%
\"{u}l \c{C}elik Alabal\i k$^{b}$

\bigskip

\textbf{Abstract}
\end{center}

\begin{quote}
In this article, we establish some conditions for the boundedness of
fractional integral operators on the vanishing generalized weighted Morrey
spaces. We also investigate corresponding commutators generated by BMO
functions.

\textbf{Keywords :} Vanishing generalized weighted Morrey spaces, fractional
integral operator, Commutator

\textbf{MSC \ \ \ \ \ \ \ :} 42B20; 42B35; 46E30
\end{quote}

\section{\textbf{Introduction}}

Morrey spaces $L^{p,\lambda }$ that play important role in the theory of
partial differential equations an in harmonic analysis were introduced by C.
Morrey \cite{Morrey} in 1938. Since then these spaces and various
generalizations of Morrey spaces have been extensively studied by many
authors. Mizuhara \cite{Mizuhara} introduced the generalized Morrey space $%
L^{p,\varphi }$ and Komori and Shirai \cite{Komori} defined the weighted
Morrey spaces $L^{p,\kappa }(w)$. Guliyev \cite{guli2012} have given the
notion of generalized weighted Morrey space $L^{p,\varphi }\left( w\right) $
which can be accepted as an extension of $L^{p,\varphi }$ and $L^{p,\kappa
}(w)$. We refer readers to the survey \cite{Rafe} and to the elegant book 
\cite{Adams2} for further generalizations about these spaces and references
on recent developments in this field.

Fractional maximal operator $M_{\alpha }$ and fractional integral operator $%
I_{\alpha }$\ play an important role in harmonic analysis. In recent years
many authors have studied the boundedness of these operators on Morrey type
spaces. The boundedness of fractional integral operator $I_{\alpha }$ was
proved by Adams \cite{Adams} on classical Morrey spaces. In \cite{Guliyev4}
authours find conditions on the pair $\left( \varphi _{1},\varphi
_{2}\right) $ which ensure Spanne type boundedness of fractional maximal $%
M_{\alpha }$ and fractional integral operator $I_{\alpha }$ from one
generalized Morrey spaces to another generalized Morrey space. The
boundedness of fractional integral operator $I_{\alpha }$ was proved by
Komori and Shirai \cite{Komori}\ on weighted Morrey spaces. The boundedness
of $I_{\alpha ,b}$ was proved by Di Fazio and Ragusa \cite{Ragusa 2}\ on
classical Morrey spaces, by Guliyev and Shukurov \cite{Guliyev4} on
generalized Morrey space and by Komori and Shirai \cite{Komori}\ on weighted
Morrey spaces. Recently, Guliyev proved the boundedness of fractional
integral operator $I_{\alpha }$ and  fractional maximal operator $M_{\alpha }
$ and their commutators on generalized weighted Morrey space $L^{p,\varphi
}\left( w\right) $ in the elegant paper \cite{guli2012}.

The vanishing Morrey space $VL^{p,\lambda }$ which has been first introduced
by Vitanza in \cite{Vitanza} is a subspace of functions in $L^{p,\lambda }$
satisfying the following condition

\begin{equation*}
\underset{r\rightarrow 0}{\lim }\sup\limits_{\substack{ x\in 
\mathbb{R}
^{n}  \\ 0<t<r}}t^{-\frac{\lambda }{p}}\left\Vert f\right\Vert _{L^{p}\left(
B\left( x,t\right) \right) }=0.
\end{equation*}

Ragusa \cite{Ragusa} proved the boundedness of $I_{\alpha ,b}$ on vanishing
Morrey spaces. Persson et al. \cite{Persson} proved the boundedness of
commutators of Hardy operators in vanishing Morrey spaces. Later N. Samko 
\cite{Samko} introduced the vanishing generalized Morrey spaces $VL_{\Pi
}^{p,\varphi }\left( \Omega \right) $\textrm{\ }and proved the boundedness
of $I_{\alpha }$ and $M_{\alpha \text{ }}$ on these spaces.

Inspired by the above papers, we introduce the vanishing generalized
weighted Morrey spaces which are suitable subspace of functions in $%
L^{p,\varphi }\left( w\right) $ and prove the boundedness of $M_{\alpha 
\text{ }}$and $I_{\alpha }$ from the vanishing generalized weighted Morrey
space $VL_{\Pi }^{p,\varphi }\left( 
\mathbb{R}
^{n};w^{p}\right) $ to another vanishing generalized weighted Morrey space $%
VL_{\Pi }^{q,\psi }\left( 
\mathbb{R}
^{n};w^{q}\right) .$ Furthermore we show that the commutators of both $%
M_{\alpha \text{ }}$and $I_{\alpha }$ are bounded in vanishing generalized
weighted Morrey spaces. In all the cases the conditions for the boundedness
are given in terms of Zygmund-type integral inequalities on $\left( \varphi
,\psi \right) $, where there is no any assumption on monotonicity of $%
\varphi $, $\psi $ in $r$.

Throughout this paper, $C,c,c_{i}$ etc. are used as positive constant that
can change from one line to another. $A\lesssim B$ means that $A\leq cB$
with some positive constant $c$. If $A\lesssim B$ and $B\lesssim A$, then we
say $A\approx B$ which means $A\ $and $B$ are equivalent.

\section{Preliminaries}

Let $w$ be a weight function on $%
\mathbb{R}
^{n},$ such that $w(x)>0$ for almost every $x\in 
\mathbb{R}
^{n}.$ For $1\leq p<\infty $ and $\Omega \subset 
\mathbb{R}
^{n}$\ we denote the weighted Lebesgue space by $L^{p,w}\left( \Omega
\right) $ with the norm

\begin{equation*}
\left\Vert f\right\Vert _{L^{p,w}\left( \Omega \right) }=\left(
\int\limits_{\Omega }\left\vert f(x)\right\vert ^{p}w(x)dx\right)
^{1/p}<\infty .
\end{equation*}

Let $1\leq p<\infty $, $\varphi $ be a positive measurable function on $%
\mathbb{R}
^{n}\times \left( 0,\infty \right) $ and $w$ be a weight function on $%
\mathbb{R}
^{n}$. We denote by $L^{p,\varphi }\left( 
\mathbb{R}
^{n};w\right) =$ $L^{p,\varphi }\left( w\right) $ the generalized weighted
Morrey space, the space of all functions $f\in L_{loc}^{p,w}\left( 
\mathbb{R}
^{n}\right) $ with finite quasinorm

\begin{equation*}
\left\Vert f\right\Vert _{L^{p,\varphi }\left( w\right) }=\underset{x\in 
\mathbb{R}
^{n},r>0}{\sup }\frac{1}{\varphi ^{\frac{1}{p}}\left( x,r\right) }\left\Vert
f\right\Vert _{L^{p,w}\left( B\left( x,r\right) \right) }
\end{equation*}%
where $B\left( x,r\right) $ denote the open ball centered at $x$ of radius $%
r.$

The fractional integral operator (Riesz potential) $I_{\alpha }$ and
fractional Maximal operator $M_{\alpha },$ which play important roles in
real and harmonic analysis, are defined by%
\begin{equation*}
I_{\alpha }f(x)=\dint\limits_{%
\mathbb{R}
^{n}}\frac{f(y)}{\left\vert x-y\right\vert ^{n-\alpha }}dy,\qquad 0<\alpha <n
\end{equation*}%
and

\begin{equation*}
M_{\alpha }f(x)=\underset{r>0}{\sup }\frac{1}{r^{n-\alpha }}%
\dint\limits_{B(x,r)}\left\vert f(y)\right\vert dy,\qquad 0\leq \alpha <n
\end{equation*}%
where $f\in L_{loc}^{1}\left( 
\mathbb{R}
^{n}\right) $. If $\alpha =0,$ then $M\equiv M_{0}$ is the Hardy-Littlewood
maximal operator.

The class of $A_{p}$ weights was introduced by B. Muckenhoupt in \cite{Muc1}%
. The Muckenhoupt classes characterize the boundedness of the
Hardy-Littlewood maximal function $M$ on weighted Lebesgue spaces. It is
known that, $M$ is bounded on $L^{p,w}(%
\mathbb{R}
^{n})$ if and only if $w\in A_{p}$, $1<p<\infty .$

Now we define Muckenhoupt class $A_{p}.$ Let $1<p<\infty $ and $\left\vert
B\right\vert $ be the Lebesgue measure of the ball $B.$ A weight $w$ is said
to be an $A_{p}$ weight, if there exists a positive constant $c_{p}$ such
that, for every ball $B\subset 
\mathbb{R}
^{n},$

\begin{equation*}
\left( \dint\limits_{B}w\left( x\right) dx\right) \left(
\dint\limits_{B}w\left( x\right) ^{1-p^{\prime }}dx\right) ^{p-1}\leq
c_{p}\left\vert B\right\vert ^{p},
\end{equation*}%
when $1<p<\infty $, and for $p=1$

\begin{equation*}
\left( \dint\limits_{B}w\left( y\right) dy\right) \leq c_{1}\left\vert
B\right\vert w\left( x\right)
\end{equation*}%
for a.e. $x\in B.$

A weight $w$ belongs to $A_{p,q}$ for $1<p<q<\infty $ if there exists $%
c_{p,q}>1$ such that

\begin{equation*}
\left\Vert w\right\Vert _{L^{q}(B)}\left\Vert w^{-1}\right\Vert
_{L^{p^{\prime }}(B)}\leq c_{p,q}\left\vert B\right\vert ^{1+\frac{1}{q}-%
\frac{1}{p}},
\end{equation*}%
where $\frac{1}{p}+\frac{1}{p^{\prime }}=1$. $A_{p,q}$ class was introduce
by B. Muckenhoupt and R. Wheeden \cite{Muc2} to study weighted norm
inequalities for fractional integral operators.

Following lemma gives the relation between $A_{p,q}$ and $A_{p}$ class.

\begin{lemma}
\cite{Lu} If $w^{q}\in A_{1+\frac{q}{p^{\prime }}}$ with $1<p<q,$ then $w\in
A_{p,q}.$
\end{lemma}

\begin{lemma}
\cite{guli2012} Let $0<\alpha <n,$ $1<p<\frac{n}{\alpha },$ $\frac{1}{q}=%
\frac{1}{p}-\frac{\alpha }{n}$ and $w\in A_{p,q}$. Then the inequality%
\begin{equation}
\left\Vert I_{\alpha }f\right\Vert _{L^{q,w^{q}}\left( B\left( x,r\right)
\right) }\lesssim \left\Vert w\right\Vert _{L^{q}\left( B(x,r)\right)
}\dint\limits_{2r}^{\infty }\left\Vert f\right\Vert _{L^{p,w^{p}}\left(
B\left( x,t\right) \right) }\left\Vert w\right\Vert _{L^{q}\left( B\left(
x_{,}t\right) \right) }^{-1}\frac{dt}{t}  \tag{$2.1$}
\end{equation}%
holds for any ball $B(x,r)$ and for all $f\in L_{loc}^{p,w}\left( 
\mathbb{R}
^{n}\right) .$
\end{lemma}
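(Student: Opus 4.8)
The plan is to prove the pointwise-in-scale estimate (2.1) by the standard Guliyev-type decomposition of the Riesz potential into a local part and a tail part, and then to estimate each part separately using the $A_{p,q}$ condition. First I would fix the ball $B=B(x,r)$ and split $f=f_1+f_2$ where $f_1=f\chi_{2B}$ and $f_2=f\chi_{(2B)^c}$, with $2B=B(x,2r)$. By linearity $I_\alpha f = I_\alpha f_1 + I_\alpha f_2$, so it suffices to bound the $L^{q,w^q}(B)$-norm of each piece by the right-hand side of (2.1).

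For the local part $I_\alpha f_1$, I would invoke the Muckenhoupt--Wheeden weighted norm inequality: since $w\in A_{p,q}$ with $\tfrac1q=\tfrac1p-\tfrac{\alpha}{n}$, the operator $I_\alpha$ is bounded from $L^{p,w^p}(\mathbb{R}^n)$ to $L^{q,w^q}(\mathbb{R}^n)$. Applying this to $f_1$ gives
\begin{equation*}
\left\Vert I_\alpha f_1\right\Vert_{L^{q,w^q}(B)}\lesssim \left\Vert f\right\Vert_{L^{p,w^p}(2B)}.
\end{equation*}
The task is then to dominate $\left\Vert f\right\Vert_{L^{p,w^p}(2B)}$ by the integral on the right of (2.1). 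The key observation is that for $t\geq 2r$ the quantity $\left\Vert f\right\Vert_{L^{p,w^p}(B(x,t))}$ is nondecreasing in $t$ while $\left\Vert w\right\Vert_{L^q(B(x,t))}$ is also nondecreasing; writing $\int_{2r}^\infty \left\Vert w\right\Vert_{L^q(B(x,t))}^{-1}\,\frac{dt}{t}$ and recognizing that this integral is comparable to $\left\Vert w\right\Vert_{L^q(2B)}^{-1}$ (up to a doubling-type constant), one reconstructs the factor $\left\Vert w\right\Vert_{L^q(B)}$ out front and recovers the desired bound.

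For the tail part $I_\alpha f_2$, I would estimate pointwise: for $y\in B(x,r)$ and $z\in(2B)^c$ we have $|y-z|\approx|x-z|$, so
\begin{equation*}
|I_\alpha f_2(y)|\lesssim \int_{(2B)^c}\frac{|f(z)|}{|x-z|^{n-\alpha}}\,dz \approx \sum_{j}\int_{2^{j+1}B\setminus 2^jB}\frac{|f(z)|}{(2^jr)^{n-\alpha}}\,dz,
\end{equation*}
or equivalently in integral form $|I_\alpha f_2(y)|\lesssim \int_{2r}^\infty t^{\alpha-n}\int_{B(x,t)}|f(z)|\,dz\,\frac{dt}{t}$. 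Applying Hölder's inequality with exponents $p$ and $p'$ together with the weight $w$ converts $\int_{B(x,t)}|f|$ into $\left\Vert f\right\Vert_{L^{p,w^p}(B(x,t))}\left\Vert w^{-1}\right\Vert_{L^{p'}(B(x,t))}$, and the $A_{p,q}$ inequality $\left\Vert w^{-1}\right\Vert_{L^{p'}(B)}\left\Vert w\right\Vert_{L^q(B)}\lesssim |B|^{1+\frac1q-\frac1p}=|B|^{1-\alpha/n}$ lets me replace $t^{\alpha-n}\left\Vert w^{-1}\right\Vert_{L^{p'}(B(x,t))}$ by $\left\Vert w\right\Vert_{L^q(B(x,t))}^{-1}$ up to constants. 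This yields the pointwise bound
\begin{equation*}
|I_\alpha f_2(y)|\lesssim \int_{2r}^\infty \left\Vert f\right\Vert_{L^{p,w^p}(B(x,t))}\left\Vert w\right\Vert_{L^q(B(x,t))}^{-1}\,\frac{dt}{t},
\end{equation*}
which is independent of $y$; taking the $L^{q,w^q}(B)$-norm over $y\in B$ then multiplies by exactly $\left\Vert w\right\Vert_{L^q(B)}$, producing the right-hand side of (2.1).

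The main obstacle I anticipate is the bookkeeping in the local part: one must show that $\left\Vert f\right\Vert_{L^{p,w^p}(2B)}$ is genuinely controlled by the tail integral $\left\Vert w\right\Vert_{L^q(B)}\int_{2r}^\infty\left\Vert f\right\Vert_{L^{p,w^p}(B(x,t))}\left\Vert w\right\Vert_{L^q(B(x,t))}^{-1}\frac{dt}{t}$, and this requires the monotonicity of the relevant quantities in $t$ together with a doubling estimate $\left\Vert w\right\Vert_{L^q(B(x,2t))}\approx \left\Vert w\right\Vert_{L^q(B(x,t))}$, which itself follows from $w^q\in A_\infty$ (a consequence of $w\in A_{p,q}$). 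Handling the tail part is more mechanical, the only delicate point being the clean application of Hölder and the $A_{p,q}$ condition to absorb the kernel decay into the weight factor uniformly in $t$.
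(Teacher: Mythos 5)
Your argument is correct and is essentially the standard proof of this estimate from the cited source \cite{guli2012}; note that the present paper imports the lemma without proof, so there is no in-paper argument to diverge from. The one imprecision is in the local part: the integral $\int_{2r}^{\infty}\left\Vert w\right\Vert _{L^{q}(B(x,t))}^{-1}\frac{dt}{t}$ need not be \emph{comparable} to $\left\Vert w\right\Vert _{L^{q}(B(x,2r))}^{-1}$ (it may even diverge), but you only need the one-sided bound $\int_{2r}^{\infty}\left\Vert w\right\Vert _{L^{q}(B(x,t))}^{-1}\frac{dt}{t}\gtrsim \left\Vert w\right\Vert _{L^{q}(B(x,2r))}^{-1}$, which follows by restricting the integration to $2r\leq t\leq 4r$ and using the doubling of $w^{q}\in A_{\infty}$; with that correction both halves of your decomposition close exactly as you describe.
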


\subsection{\textbf{Vanishing generalized weighted Morrey spaces}}

Let $\Omega $ be an open set in $%
\mathbb{R}
^{n}$ and $\Pi $ be an arbitrary subset of $\Omega $. Let also $w$ be a
weight function on $\Omega $, $\varphi \left( x,r\right) $ be a measurable
non negative function on $\Pi \times \left[ 0,l\right) $ $\left(
l=diam~\Omega \right) $ and positive for all $(x,t)\in \Pi \times (0,l)$.
The vanishing weighted Morrey space $VL_{\Pi }^{p,\varphi }\left( \Omega
;w\right) =VL_{\Pi }^{p,\varphi }\left( w\right) $ is defined as the space
of functions $f\in L_{loc}^{p,w}\left( \Omega \right) $ with finite quasi
norm

\begin{equation*}
\left\Vert f\right\Vert _{VL_{\Pi }^{p,\varphi }\left( w\right) }=\underset{%
x\in \Pi ,0<r<l}{\sup }\frac{1}{\varphi ^{\frac{1}{p}}\left( x,r\right) }%
\left\Vert f\right\Vert _{L^{p,w}\left( \widetilde{B}\left( x,r\right)
\right) }
\end{equation*}%
such that

\begin{equation}
\underset{r\rightarrow 0}{\lim }\text{ }\underset{x\in \Pi }{\sup }\frac{1}{%
\varphi ^{\frac{1}{p}}\left( x,r\right) }\left\Vert f\right\Vert
_{L^{p,w}\left( \widetilde{B}\left( x,r\right) \right) }=0,  \tag{$2.2$}
\label{21}
\end{equation}%
where $\widetilde{B}\left( x,r\right) =B\left( x,r\right) \cap \Omega $ and $%
1\leq p<\infty .$

Naturally, it is suitable to impose the function $\varphi \left( x,r\right) $
on the following conditions

\begin{equation}
\underset{r\rightarrow 0}{\lim }\text{ }\underset{x\in \Pi }{\sup }\frac{%
\left\Vert w\right\Vert _{L^{1}\left( \widetilde{B}(x,r)\right) }}{\varphi
\left( x,r\right) }=0  \tag{$2.3$}  \label{22}
\end{equation}%
and

\begin{equation}
\underset{r>1}{\inf }\text{ }\underset{x\in \Pi }{\sup }\text{ }\varphi
\left( x,r\right) >0  \tag{$2.4$}  \label{23}
\end{equation}%
where the last condition must be imposed when $\Omega $ is unbounded. These
conditions makes $VM_{\Pi }^{p,\varphi }\left( w\right) $\ non trivial,
since bounded functions which have compact support belong to these spaces.

Henceforth we denote by $\varphi \in \mathfrak{B}\left( w\right) $ if $%
\varphi \left( x,r\right) $ is a measurable non negative function on $\Pi
\times \left[ 0,l\right) $ and positive for all $(x,t)\in \Pi \times (0,l)$
and satisfies the condition (2.3) and (2.4).

\subsection{\textbf{Commutator of Fractional Integral Operator }}

Let $f\in L_{loc}^{1}\left( 
\mathbb{R}
^{n}\right) $ we define%
\begin{equation*}
\left\Vert f\right\Vert _{\ast }=\underset{x\in 
\mathbb{R}
^{n},r>0}{\sup }\frac{1}{\left\vert B\left( x,r\right) \right\vert }%
\dint\limits_{B\left( x,r\right) }\left\vert f\left( y\right) -f_{B\left(
x,r\right) }\right\vert dy
\end{equation*}%
where%
\begin{equation*}
f_{B\left( x,r\right) }=\frac{1}{\left\vert B\left( x,r\right) \right\vert }%
\dint\limits_{B\left( x,r\right) }f\left( y\right) dy.
\end{equation*}

The space of functions of bounded mean oscillation (BMO) are the class of
functions whose deviation from their means over cubes is bounded. The set%
\begin{equation*}
BMO\left( 
\mathbb{R}
^{n}\right) =\left\{ f\in L_{loc}^{1}\left( 
\mathbb{R}
^{n}\right) :\left\Vert f\right\Vert _{\ast }<\infty \right\} 
\end{equation*}%
is called the function space of BMO space.

Next we shall introduce the commutators of maximal operator $M_{\alpha ,b}$
and the commutator of fractional integral operator $I_{\alpha ,b}$.

Let $b$ be a locally integrable function, $M_{\alpha ,b}$ and $I_{\alpha ,b}$
which are formed by $b$ are defined by%
\begin{equation*}
M_{\alpha ,b}f\left( x\right) =b\left( x\right) M_{\alpha }f\left( x\right)
-M_{\alpha }\left( bf\right) \left( x\right)
\end{equation*}%
and%
\begin{equation*}
I_{\alpha ,b}f\left( x\right) =b\left( x\right) I_{\alpha }f\left( x\right)
-I_{\alpha }\left( bf\right) \left( x\right) .
\end{equation*}

\begin{lemma}
\cite{guli2012}

Let $0<\alpha <n,$ $1<p<\frac{n}{\alpha },$ $\frac{1}{q}=\frac{1}{p}-\frac{%
\alpha }{n},b\in BMO\left( 
\mathbb{R}
^{n}\right) $ and $w\in A_{p,q}$. Then the inequality%
\begin{equation}
\left\Vert I_{\alpha ,b}f\right\Vert _{L^{q,w^{q}}\left( B(x,r)\right)
}\lesssim \left\Vert b\right\Vert _{\ast }\left\Vert w\right\Vert
_{L^{q}\left( B(x,r)\right) }\dint\limits_{2r}^{\infty }\ln \left( e+\frac{t%
}{r}\right) \left\Vert f\right\Vert _{L^{p,w^{p}}\left( B\left( x,t\right)
\right) }\left\Vert w\right\Vert _{L^{q}\left( B\left( x,t\right) \right)
}^{-1}\frac{dt}{t}  \tag{$2.5$}
\end{equation}%
holds for any ball $B(x,r)$ and for all $f\in L_{loc}^{p,w^{p}}\left( 
\mathbb{R}
^{n}\right) .$
\end{lemma}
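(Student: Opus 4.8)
The plan is to reduce (2.5) to the un-commuted local estimate (2.1) together with two standard weighted \textit{BMO} facts. Fix a ball $B=B(x,r)$ and write $b_{B}$ for its (unweighted) mean. Since the commutator is linear in $f$, I would first split $f=f_{1}+f_{2}$ with $f_{1}=f\chi _{2B}$ and $f_{2}=f\chi _{\mathbb{R}^{n}\setminus 2B}$, so that $\Vert I_{\alpha ,b}f\Vert _{L^{q,w^{q}}(B)}\leq \Vert I_{\alpha ,b}f_{1}\Vert _{L^{q,w^{q}}(B)}+\Vert I_{\alpha ,b}f_{2}\Vert _{L^{q,w^{q}}(B)}$. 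On the far piece I would use the pointwise identity $I_{\alpha ,b}f_{2}(y)=(b(y)-b_{B})I_{\alpha }f_{2}(y)-I_{\alpha }((b-b_{B})f_{2})(y)$, which holds because the constant $b_{B}$ is annihilated by the commutator.

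For the near piece, I would invoke the global weighted boundedness of the \textit{BMO} commutator, namely $\Vert I_{\alpha ,b}g\Vert _{L^{q,w^{q}}(\mathbb{R}^{n})}\lesssim \Vert b\Vert _{\ast }\Vert g\Vert _{L^{p,w^{p}}(\mathbb{R}^{n})}$ for $w\in A_{p,q}$ (the Muckenhoupt--Wheeden/Segovia--Torrea inequality), applied to $g=f_{1}$. This gives $\Vert I_{\alpha ,b}f_{1}\Vert _{L^{q,w^{q}}(B)}\lesssim \Vert b\Vert _{\ast }\Vert f\Vert _{L^{p,w^{p}}(2B)}$, and I would then convert the right-hand side into integral form exactly as in (2.1): using the monotonicity of $t\mapsto \Vert f\Vert _{L^{p,w^{p}}(B(x,t))}$ and the doubling of $w^{q}$ (valid since $w^{q}\in A_{\infty }$), one has $\Vert f\Vert _{L^{p,w^{p}}(2B)}\lesssim \Vert w\Vert _{L^{q}(B)}\int_{2r}^{\infty }\Vert f\Vert _{L^{p,w^{p}}(B(x,t))}\Vert w\Vert _{L^{q}(B(x,t))}^{-1}\frac{dt}{t}$, which is dominated by the claimed bound since $\ln (e+t/r)\geq 1$.

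The far piece carries the logarithm. For $y\in B$ and $z\in \mathbb{R}^{n}\setminus 2B$ one has $|y-z|\approx |x-z|$, so after decomposing $\mathbb{R}^{n}\setminus 2B$ into the dyadic annuli $A_{k}=B(x,2^{k+1}r)\setminus B(x,2^{k}r)$ the kernel is $\approx (2^{k}r)^{\alpha -n}$ on $A_{k}$. Writing $b-b_{B}=(b-b_{B(x,2^{k+1}r)})+(b_{B(x,2^{k+1}r)}-b_{B})$ and using the telescoping estimate $|b_{B(x,2^{k+1}r)}-b_{B}|\lesssim (k+1)\Vert b\Vert _{\ast }$, I would bound $\int_{A_{k}}|b-b_{B}|\,|f|$ by weighted H\"older: the first summand uses the weighted John--Nirenberg inequality for $w^{-p^{\prime }}\in A_{\infty }$, giving $(\int_{B(x,2^{k+1}r)}|b-b_{B(x,2^{k+1}r)}|^{p^{\prime }}w^{-p^{\prime }})^{1/p^{\prime }}\lesssim \Vert b\Vert _{\ast }\Vert w^{-1}\Vert _{L^{p^{\prime }}(B(x,2^{k+1}r))}$, while the second uses $\int_{A_{k}}|f|\leq \Vert f\Vert _{L^{p,w^{p}}(B(x,2^{k+1}r))}\Vert w^{-1}\Vert _{L^{p^{\prime }}(B(x,2^{k+1}r))}$. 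In both cases the $A_{p,q}$ condition turns $\Vert w^{-1}\Vert _{L^{p^{\prime }}(B(x,2^{k+1}r))}$ into $(2^{k}r)^{n(1+1/q-1/p)}\Vert w\Vert _{L^{q}(B(x,2^{k+1}r))}^{-1}$, and the relation $1/q=1/p-\alpha /n$ makes the power of $2^{k}r$ cancel exactly against the kernel factor $(2^{k}r)^{\alpha -n}$. Summing over $k$ and noting that $(k+1)\approx \ln (e+t/r)$ for $t\approx 2^{k}r$ yields the $y$-independent pointwise bound $|I_{\alpha }((b-b_{B})f_{2})(y)|\lesssim \Vert b\Vert _{\ast }\int_{2r}^{\infty }\ln (e+t/r)\Vert f\Vert _{L^{p,w^{p}}(B(x,t))}\Vert w\Vert _{L^{q}(B(x,t))}^{-1}\frac{dt}{t}$; taking the $L^{q,w^{q}}(B)$ norm of a constant simply multiplies by $\Vert w\Vert _{L^{q}(B)}$. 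The term $(b(y)-b_{B})I_{\alpha }f_{2}(y)$ is handled in the same spirit, using the pointwise version of estimate (2.1) for $I_{\alpha }f_{2}$ (which carries no logarithm) together with the weighted John--Nirenberg inequality for $w^{q}\in A_{\infty }$ to estimate $(\int_{B}|b-b_{B}|^{q}w^{q})^{1/q}\lesssim \Vert b\Vert _{\ast }\Vert w\Vert _{L^{q}(B)}$.

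I expect the main obstacle to be the weighted book-keeping rather than the geometry. One must justify the two weighted John--Nirenberg inequalities (for the $A_{\infty }$ weights $w^{q}$ and $w^{-p^{\prime }}$, measured against the unweighted averages $b_{B}$), verify the doubling and reverse-doubling of $w^{q}$ needed to pass from the discrete sum over annuli to the continuous integral $\int_{2r}^{\infty }\cdots \frac{dt}{t}$, and check that the $A_{p,q}$ exponent produces the exact cancellation of the power of $2^{k}r$. Once these weighted estimates are in place, assembling the near and far pieces gives (2.5).
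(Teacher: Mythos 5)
The paper does not actually prove Lemma 2.3 --- it is quoted from \cite{guli2012} --- and the proof given there is essentially the argument you outline: the splitting $f=f\chi_{2B}+f\chi_{\mathbb{R}^{n}\setminus 2B}$, the global weighted boundedness of $I_{\alpha ,b}$ for the near part, and the dyadic-annuli estimate with the BMO telescoping $|b_{B(x,2^{k+1}r)}-b_{B}|\lesssim (k+1)\left\Vert b\right\Vert _{\ast }$ producing the logarithm for the far part. Your outline is correct; the points you flag as needing verification (the weighted John--Nirenberg inequalities for $w^{q}$ and $w^{-p^{\prime }}$ in $A_{\infty }$, the doubling of $\left\Vert w\right\Vert _{L^{q}(B(x,t))}$ used to pass from the discrete sum to $\int_{2r}^{\infty }\cdots \frac{dt}{t}$, and the exact cancellation of the power of $2^{k}r$ coming from $1+\frac{1}{q}-\frac{1}{p}=\frac{n-\alpha }{n}$) are all standard and do go through.
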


\section{Main Results}

\subsection{Boundedness of Fractional Integral operators and their
commutators on vanishing generalized weighted Morrey space}

\begin{theorem}
Let $0<\alpha <n,$ $1<p<\frac{n}{\alpha },$ $\frac{1}{q}=\frac{1}{p}-\frac{%
\alpha }{n},$ $w\in A_{p,q}$, $\varphi \in \mathcal{B}\left( w^{p}\right) $
and $\psi \in $ $\mathcal{B}\left( w^{q}\right) $. Then the operators $%
M_{\alpha }$ and $I_{\alpha }$ are bounded from the vanishing space $VL_{\Pi
}^{p,\varphi }\left( 
\mathbb{R}
^{n};w^{p}\right) $ to another vanishing space $VL_{\Pi }^{q,\psi }\left( 
\mathbb{R}
^{n};w^{q}\right) ,$ if%
\begin{equation}
c_{\delta }:=\dint\limits_{\delta }^{\infty }\sup_{x\in \Pi }\frac{\varphi ^{%
\frac{1}{p}}\left( x,t\right) }{\left\Vert w\right\Vert _{L^{q}\left(
B\left( x,t\right) \right) }}\frac{dt}{t}<\infty  \tag{$3.1$}
\end{equation}%
for every $\delta >0,$ and%
\begin{equation}
\dint\limits_{r}^{\infty }\frac{\varphi ^{\frac{1}{p}}\left( x,t\right) }{%
\left\Vert w\right\Vert _{L^{q}\left( B\left( x,t\right) \right) }}\frac{dt}{%
t}\leq c_{0}\frac{\psi ^{\frac{1}{q}}\left( x,r\right) }{\left\Vert
w\right\Vert _{L^{q}\left( B\left( x,r\right) \right) }}  \tag{$3.2$}
\end{equation}%
where $c_{0}$ does not depend on $x\in \Pi $ and $r>0.$
\end{theorem}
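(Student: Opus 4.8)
The plan is to derive everything from the pointwise estimate of Lemma~2.2 for $I_{\alpha }$ and to reduce the maximal operator to it. Since $M_{\alpha }f(x)\le c\,I_{\alpha }(|f|)(x)$ pointwise, and the quasinorms of these spaces depend on $f$ only through $|f|$, it suffices to prove both the norm estimate and the vanishing property for $I_{\alpha }$; the statements for $M_{\alpha }$ then follow at once. For the norm boundedness I would start from $(2.1)$ and insert the trivial bound $\|f\|_{L^{p,w^{p}}(B(x,t))}\le \|f\|_{VL_{\Pi }^{p,\varphi }(w^{p})}\,\varphi ^{1/p}(x,t)$, which comes straight from the definition of the quasinorm. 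The right-hand side of $(2.1)$ then becomes
\[
\|w\|_{L^{q}(B(x,r))}\,\|f\|_{VL_{\Pi }^{p,\varphi }(w^{p})}\int_{2r}^{\infty }\frac{\varphi ^{1/p}(x,t)}{\|w\|_{L^{q}(B(x,t))}}\frac{dt}{t}.
\]
Since $\int_{2r}^{\infty }\le \int_{r}^{\infty }$, condition $(3.2)$ bounds the integral by $c_{0}\,\psi ^{1/q}(x,r)/\|w\|_{L^{q}(B(x,r))}$; the factors $\|w\|_{L^{q}(B(x,r))}$ cancel, and dividing by $\psi ^{1/q}(x,r)$ and taking the supremum over $x\in \Pi $, $r>0$ yields $\|I_{\alpha }f\|_{VL_{\Pi }^{q,\psi }(w^{q})}\lesssim \|f\|_{VL_{\Pi }^{p,\varphi }(w^{p})}$.

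The substance of the theorem is the vanishing property: I must show that $f\in VL_{\Pi }^{p,\varphi }(w^{p})$ forces $I_{\alpha }f$ to satisfy $(2.2)$ with $(\psi ,q)$ in place of $(\varphi ,p)$. Write $\mathcal{F}(t)=\sup_{x\in \Pi }\|f\|_{L^{p,w^{p}}(B(x,t))}/\varphi ^{1/p}(x,t)$, so that the hypothesis on $f$ reads $\lim_{t\rightarrow 0}\mathcal{F}(t)=0$. Feeding the bound $\|f\|_{L^{p,w^{p}}(B(x,t))}\le \mathcal{F}(t)\,\varphi ^{1/p}(x,t)$ into $(2.1)$ and splitting the $t$-integral at a threshold $\delta _{0}$ (with $r<\delta _{0}/2$, so $2r<\delta _{0}$) produces two pieces. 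On $2r\le t\le \delta _{0}$ I replace $\mathcal{F}(t)$ by $\sup_{0<s\le \delta _{0}}\mathcal{F}(s)$ and apply $(3.2)$ exactly as above, getting a contribution bounded by $c_{0}\sup_{0<s\le \delta _{0}}\mathcal{F}(s)$, uniformly in $x$ and $r$; this is small once $\delta _{0}$ is small. On $t>\delta _{0}$ I replace $\mathcal{F}(t)$ by $\|f\|_{VL_{\Pi }^{p,\varphi }(w^{p})}$ and bound the integral by $c_{\delta _{0}}<\infty $ via $(3.1)$, leaving the factor $\|w\|_{L^{q}(B(x,r))}/\psi ^{1/q}(x,r)$ out front.

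The key point, and where the hypotheses on $\psi \in \mathcal{B}(w^{q})$ enter, is that this leftover prefactor tends to $0$ uniformly in $x$ as $r\rightarrow 0$. Indeed $\|w\|_{L^{q}(B(x,r))}^{q}=\|w^{q}\|_{L^{1}(\widetilde{B}(x,r))}$, so $(\|w\|_{L^{q}(B(x,r))}/\psi ^{1/q}(x,r))^{q}$ is exactly the quantity appearing in condition $(2.3)$, which vanishes as $r\rightarrow 0$. Hence for each fixed $\delta _{0}$ the second piece tends to $0$. The standard two-parameter argument then concludes the proof: given $\varepsilon >0$, first choose $\delta _{0}$ to make the first piece smaller than $\varepsilon /2$, and then, with $\delta _{0}$ frozen, shrink $r$ to make the second piece smaller than $\varepsilon /2$ uniformly in $x$.

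I expect the only delicate point to be the bookkeeping in the order of limits, namely that $\delta _{0}$ must be fixed before $r$ is sent to $0$, together with a clean identification of the roles of the hypotheses: condition $(3.1)$ is precisely what guarantees finiteness of the tail integral over $t>\delta _{0}$, while condition $(2.3)$ is precisely what kills the resulting prefactor $\|w\|_{L^{q}(B(x,r))}/\psi ^{1/q}(x,r)$. Everything else is a direct substitution into Lemma~2.2, and the reduction $M_{\alpha }\le c\,I_{\alpha }(|\cdot|)$ transfers both conclusions to the maximal operator without extra work.
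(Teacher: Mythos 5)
Your proposal is correct and follows essentially the same route as the paper: reduce $M_{\alpha}$ to $I_{\alpha}$ via the pointwise bound, obtain the norm inequality by substituting the quasinorm bound into $(2.1)$ and invoking $(3.2)$, and prove the vanishing property by splitting the integral in $(2.1)$ at a threshold $\delta_{0}$, controlling the near part by the vanishing hypothesis together with $(3.2)$ and the far part by $(3.1)$ together with condition $(2.3)$ for $\psi\in\mathcal{B}(w^{q})$, fixing $\delta_{0}$ before sending $r\rightarrow 0$. The decomposition, the roles assigned to each hypothesis, and the order of limits all match the paper's argument.
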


\begin{proof}
By the pointwise inequality\textbf{\ }$M_{\alpha }f(x)\leq I_{\alpha }\left(
\left\vert f\right\vert \right) (x)$ for $0<\alpha <n$ it is sufficient to
prove the theorem only for fractional integral operator. For every $f\in
VL_{\Pi }^{p,\varphi }\left( 
\mathbb{R}
^{n};w^{p}\right) $ we have to show that%
\begin{equation*}
\left\Vert I_{\alpha }f\right\Vert _{VL_{\Pi }^{q,\psi }\left( w^{q}\right)
}\leq c\left\Vert f\right\Vert _{VL_{\Pi }^{p,\varphi }\left( w^{p}\right) }
\end{equation*}%
and%
\begin{equation*}
\underset{r\rightarrow 0}{\lim }\sup\limits_{x\in \Pi }\frac{1}{\psi ^{\frac{%
1}{q}}\left( x,r\right) }\left\Vert I_{\alpha }f\right\Vert
_{L^{q,w^{q}}\left( B\left( x,r\right) \right) }=0.
\end{equation*}%
Firstly, we prove that $I_{\alpha }f\in VL_{\Pi }^{q,\psi }\left(
w^{q}\right) $ when $f\in VL_{\Pi }^{p,\varphi }\left( w^{p}\right) $.
Namely,%
\begin{equation*}
\underset{r\rightarrow 0}{\lim }\sup\limits_{x\in \Pi }\frac{1}{\varphi ^{%
\frac{1}{p}}\left( x,r\right) }\left\Vert f\right\Vert _{L^{p,w^{p}}\left(
B\left( x,r\right) \right) }=0\Rightarrow \underset{r\rightarrow 0}{\lim }%
\sup\limits_{x\in \Pi }\frac{1}{\psi ^{\frac{1}{q}}\left( x,r\right) }%
\left\Vert I_{\alpha }f\right\Vert _{L^{q,w^{q}}\left( B\left( x,r\right)
\right) }=0.
\end{equation*}%
Let $0<r<\delta _{0}$. To show that $\sup\limits_{x\in \Pi }\frac{1}{\psi ^{%
\frac{1}{q}}\left( x,r\right) }\left\Vert I_{\alpha }f\right\Vert
_{L^{q,w^{q}}\left( B\left( x,r\right) \right) }<\varepsilon $ for $%
0<r<\delta _{0}$, by using the inequality $\left( 2.1\right) $ we can write%
\begin{equation}
\frac{1}{\psi ^{\frac{1}{q}}\left( x,r\right) }\left\Vert I_{\alpha
}f\right\Vert _{L^{q,w^{q}}\left( B\left( x,r\right) \right) }\leq C\left[
A_{\delta _{0}}\left( x,r\right) +B_{\delta _{0}}\left( x,r\right) \right] 
\tag{$3.3$}
\end{equation}%
where%
\begin{equation*}
A_{\delta _{0}}\left( x,r\right) :=\frac{\left\Vert w\right\Vert
_{L^{q}\left( B(x,r)\right) }}{\psi ^{\frac{1}{q}}\left( x,r\right) }\left(
\dint\limits_{r}^{\delta _{0}}\frac{\varphi ^{\frac{1}{p}}\left( x,t\right) 
}{t\text{ }\left\Vert w\right\Vert _{L^{q}(B(x,t))}}\underset{0<r<t}{\sup }%
\frac{1}{\varphi ^{\frac{1}{p}}\left( x,r\right) }\left\Vert f\right\Vert
_{L^{p,w^{p}}\left( B\left( x,r\right) \right) }dt\right)
\end{equation*}%
and%
\begin{equation*}
B_{\delta _{0}}\left( x,r\right) :=\frac{\left\Vert w\right\Vert
_{L^{q}\left( B(x,r)\right) }}{\psi ^{\frac{1}{q}}\left( x,r\right) }\left(
\dint\limits_{\delta _{0}}^{\infty }\frac{\varphi ^{\frac{1}{p}}\left(
x,t\right) }{t\text{ }\left\Vert w\right\Vert _{L^{q}(B(x,t))}}\underset{%
0<r<t}{\sup }\frac{1}{\varphi ^{\frac{1}{p}}\left( x,r\right) }\left\Vert
f\right\Vert _{L^{p,w^{p}}\left( B\left( x,r\right) \right) }dt\right) .
\end{equation*}%
Now we fix $\delta _{0}>0$ such that $\sup\limits_{x\in \Pi }\underset{0<r<t}%
{\sup }\frac{1}{\varphi ^{\frac{1}{p}}\left( x,r\right) }\left\Vert
f\right\Vert _{L^{p,w^{p}}\left( B\left( x,r\right) \right) }<\frac{%
\varepsilon }{2Cc_{0}}$, where $c_{0}$ and $C$ are constants from $\left(
3.2\right) $ and $\left( 3.3\right) ,$ respectively. This allows to estimate
uniformly for $0<r<\delta _{0}$%
\begin{equation*}
\sup\limits_{x\in \Pi }CA_{\delta _{0}}\left( x,r\right) <\frac{\varepsilon 
}{2}.
\end{equation*}%
The estimation of the second term, in view the condition (3.1), we get%
\begin{equation*}
B_{\delta _{0}}\left( x,r\right) \leq c_{\delta _{0}}\frac{\left\Vert
w\right\Vert _{L^{q}\left( B(x,r)\right) }}{\psi ^{\frac{1}{q}}\left(
x,r\right) }\left\Vert f\right\Vert _{VL_{\Pi }^{p,\varphi }\left(
w^{p}\right) }
\end{equation*}%
where $c_{\delta _{0}}$ is the constant from $\left( 3.1\right) .$ Since $%
\psi \in \mathcal{B}\left( w^{q}\right) $ it suffices to choose $r$ small
enough such that%
\begin{equation*}
\sup\limits_{x\in \Pi }\frac{\left\Vert w^{q}\right\Vert _{L^{1}\left(
B(x,r)\right) }}{\psi \left( x,r\right) }<\left( \frac{\varepsilon }{%
2Cc_{\delta _{0}}\left\Vert f\right\Vert _{VL_{\Pi }^{p,\varphi }\left(
w^{p}\right) }}\right) ^{q}
\end{equation*}%
which gives required result.

By the definition of the norm and Lemma 2.2 we have%
\begin{eqnarray*}
\left\Vert I_{\alpha }f\right\Vert _{VL_{\Pi }^{q,\psi }\left( w^{q}\right)
} &=&\underset{x\in \Pi ,r>0}{\sup }\frac{1}{\psi ^{\frac{1}{q}}\left(
x,r\right) }\left\Vert I_{\alpha }f\right\Vert _{L^{q,w^{q}}\left( B\left(
x,r\right) \right) } \\
&\leq &c\underset{x\in \Pi ,r>0}{\sup }\frac{1}{\psi ^{\frac{1}{q}}\left(
x,r\right) }\left\Vert w\right\Vert _{L^{q}\left( B(x,r)\right)
}\dint\limits_{r}^{\infty }\left\Vert f\right\Vert _{_{L^{p,w^{p}}\left(
B\left( x,t\right) \right) }}\left\Vert w\right\Vert _{L^{q}\left( B\left(
x,t\right) \right) }^{-1}\frac{dt}{t}.
\end{eqnarray*}%
Thus by the condition (3.2) we get%
\begin{equation*}
\left\Vert I_{\alpha }f\right\Vert _{VL_{\Pi }^{q,\psi }\left( w^{q}\right)
}\leq c\left\Vert f\right\Vert _{VL_{\Pi }^{p,\varphi }\left( w^{p}\right) }
\end{equation*}%
which completes the proof of Theorem 3.2.
\end{proof}

\begin{remark}
Note that in the case $w\equiv 1$ Theorem 3.1 was proved in \cite{Samko}. 
\end{remark}

\begin{theorem}
Let $0<\alpha <n,$ $1<p<\frac{n}{\alpha },$ $\frac{1}{q}=\frac{1}{p}-\frac{%
\alpha }{n},b\in BMO\left( 
\mathbb{R}
^{n}\right) $, $w\in A_{p,q}$ and $\varphi \in B(w^{p})$ and $\psi \in
B(w^{q})$. If%
\begin{equation}
c_{\delta }:=\underset{0<r<\delta }{\sup }\text{ }\dint\limits_{\delta
}^{\infty }\left( e+\ln \frac{t}{r}\right) \sup_{x\in \Pi }\frac{\varphi ^{%
\frac{1}{p}}\left( x,t\right) }{\left\Vert w\right\Vert _{L^{q}\left(
B(x,t)\right) }}\frac{dt}{t}<\infty  \tag{$3.4$}
\end{equation}%
for every $\delta >0,$ and%
\begin{equation}
\dint\limits_{r}^{\infty }\ln \left( e+\frac{t}{r}\right) \frac{\varphi ^{%
\frac{1}{p}}\left( x,t\right) }{\left\Vert w\right\Vert _{L^{q}\left(
B\left( x,t\right) \right) }}\frac{dt}{t}\leq c_{0}\frac{\psi ^{\frac{1}{q}%
}\left( x,r\right) }{\left\Vert w\right\Vert _{L^{q}\left( B\left(
x,r\right) \right) }}  \tag{$3.5$}
\end{equation}%
where $c_{0}$ does not depend on $x\in \Pi $ and $r>0,$ then $I_{\alpha ,b}$
is bounded from $VL_{\Pi }^{p,\varphi }\left( w^{p}\right) $ to $VL_{\Pi
}^{q,\psi }\left( w^{q}\right) .$
\end{theorem}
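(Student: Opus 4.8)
The plan is to follow the architecture of the proof of Theorem 3.2, replacing the local estimate of Lemma 2.2 by its commutator counterpart Lemma 2.3, i.e. inequality $(2.5)$. The two structural novelties of $(2.5)$ are the extra factor $\left\Vert b\right\Vert_{\ast}$ and the logarithmic weight $\ln\left(e+\tfrac{t}{r}\right)$ inside the integral; everything else is formally identical. As in Theorem 3.2, establishing that $I_{\alpha,b}$ maps $VL_{\Pi}^{p,\varphi}\left(w^{p}\right)$ boundedly into $VL_{\Pi}^{q,\psi}\left(w^{q}\right)$ reduces to two claims: the quasinorm bound $\left\Vert I_{\alpha,b}f\right\Vert_{VL_{\Pi}^{q,\psi}\left(w^{q}\right)}\lesssim\left\Vert b\right\Vert_{\ast}\left\Vert f\right\Vert_{VL_{\Pi}^{p,\varphi}\left(w^{p}\right)}$, and the vanishing property $\lim_{r\rightarrow0}\sup_{x\in\Pi}\psi^{-1/q}\left(x,r\right)\left\Vert I_{\alpha,b}f\right\Vert_{L^{q,w^{q}}\left(B\left(x,r\right)\right)}=0$.

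For the quasinorm bound I would insert $(2.5)$ into the definition of the $VL_{\Pi}^{q,\psi}\left(w^{q}\right)$ quasinorm, divide by $\psi^{1/q}\left(x,r\right)$, and take the supremum over $x\in\Pi$, $r>0$. The inner integral $\int_{r}^{\infty}\ln\left(e+\tfrac{t}{r}\right)\left\Vert f\right\Vert_{L^{p,w^{p}}\left(B\left(x,t\right)\right)}\left\Vert w\right\Vert_{L^{q}\left(B\left(x,t\right)\right)}^{-1}\tfrac{dt}{t}$ is exactly the object controlled by hypothesis $(3.5)$ once $\left\Vert f\right\Vert_{L^{p,w^{p}}\left(B\left(x,t\right)\right)}$ is majorized by $\varphi^{1/p}\left(x,t\right)\left\Vert f\right\Vert_{VL_{\Pi}^{p,\varphi}\left(w^{p}\right)}$; the factor $\left\Vert w\right\Vert_{L^{q}\left(B\left(x,r\right)\right)}$ produced by $(2.5)$ then cancels against $\psi^{1/q}\left(x,r\right)\left\Vert w\right\Vert_{L^{q}\left(B\left(x,r\right)\right)}^{-1}$, leaving the claimed bound with constant proportional to $c_{0}\left\Vert b\right\Vert_{\ast}$.

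The vanishing property is the substantive part. Fix $\varepsilon>0$; by $(2.5)$ I would write $\psi^{-1/q}\left(x,r\right)\left\Vert I_{\alpha,b}f\right\Vert_{L^{q,w^{q}}\left(B\left(x,r\right)\right)}\leq C\left[A_{\delta_{0}}\left(x,r\right)+B_{\delta_{0}}\left(x,r\right)\right]$, where $A_{\delta_{0}}$ and $B_{\delta_{0}}$ are defined as in Theorem 3.2 but now carry the extra factor $\ln\left(e+\tfrac{t}{r}\right)$ and arise from splitting $\int_{r}^{\infty}=\int_{r}^{\delta_{0}}+\int_{\delta_{0}}^{\infty}$. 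For the head term $A_{\delta_{0}}$ I would bound the interior supremum $\sup_{0<s<t}\varphi^{-1/p}\left(x,s\right)\left\Vert f\right\Vert_{L^{p,w^{p}}\left(B\left(x,s\right)\right)}$ by its value on $0<s<\delta_{0}$, which can be made smaller than $\varepsilon/\left(2Cc_{0}\right)$ by choosing $\delta_{0}$ small (this is where $f\in VL_{\Pi}^{p,\varphi}\left(w^{p}\right)$ enters); the leftover factor $\tfrac{\left\Vert w\right\Vert_{L^{q}\left(B\left(x,r\right)\right)}}{\psi^{1/q}\left(x,r\right)}\int_{r}^{\delta_{0}}\ln\left(e+\tfrac{t}{r}\right)\tfrac{\varphi^{1/p}\left(x,t\right)}{t\left\Vert w\right\Vert_{L^{q}\left(B\left(x,t\right)\right)}}\,dt$ is at most $c_{0}$ by $(3.5)$ (enlarging the upper limit to $\infty$), so $\sup_{x}CA_{\delta_{0}}<\varepsilon/2$ uniformly for $0<r<\delta_{0}$.

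For the tail $B_{\delta_{0}}$ I would replace the interior supremum by $\left\Vert f\right\Vert_{VL_{\Pi}^{p,\varphi}\left(w^{p}\right)}$ and then invoke $(3.4)$, using the elementary bound $\ln\left(e+\tfrac{t}{r}\right)\leq e+\ln\tfrac{t}{r}$ valid for $t\geq\delta_{0}>r$, to obtain $\sup_{x}\int_{\delta_{0}}^{\infty}\ln\left(e+\tfrac{t}{r}\right)\tfrac{\varphi^{1/p}\left(x,t\right)}{t\left\Vert w\right\Vert_{L^{q}\left(B\left(x,t\right)\right)}}\,dt\leq c_{\delta_{0}}$ with $c_{\delta_{0}}$ finite and independent of $r$. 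This leaves $\sup_{x}B_{\delta_{0}}\lesssim c_{\delta_{0}}\left\Vert f\right\Vert_{VL_{\Pi}^{p,\varphi}\left(w^{p}\right)}\sup_{x}\tfrac{\left\Vert w\right\Vert_{L^{q}\left(B\left(x,r\right)\right)}}{\psi^{1/q}\left(x,r\right)}$, and since $\psi\in\mathcal{B}\left(w^{q}\right)$ the last supremum tends to $0$ as $r\rightarrow0$ by condition $(2.3)$ raised to the power $1/q$, so $r$ small makes $CB_{\delta_{0}}<\varepsilon/2$. I expect the main obstacle to be precisely this tail: the factor $\ln\left(e+\tfrac{t}{r}\right)$ grows without bound as $r\rightarrow0$, so, unlike in Theorem 3.2, it cannot simply be pulled outside the integral. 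Hypothesis $(3.4)$, with its supremum over $0<r<\delta$, is tailored to absorb this $r$-dependence into a single finite constant $c_{\delta_{0}}$, after which the decay of $\left\Vert w\right\Vert_{L^{q}\left(B\left(x,r\right)\right)}/\psi^{1/q}\left(x,r\right)$ furnished by $\psi\in\mathcal{B}\left(w^{q}\right)$ delivers the limit; verifying that $(3.4)$ genuinely yields such a uniform constant and that the supremum over $x$ commutes with the $t$-integral is the delicate book-keeping to be carried out.
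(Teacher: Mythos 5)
Your proposal is correct and follows exactly the route the paper takes: the paper's own proof of this theorem is the single remark that it ``follows by Lemma 2.3 and the same procedure argued in the proof of Theorem 3.1,'' and your write-up is precisely that procedure with inequality $(2.5)$ in place of $(2.1)$ and with conditions $(3.4)$--$(3.5)$ absorbing the logarithmic factor. In fact you supply more detail than the paper does, correctly identifying why the $r$-dependence of $\ln\left(e+\tfrac{t}{r}\right)$ forces the supremum over $0<r<\delta$ in $(3.4)$.
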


\begin{proof}
Proof of Theorem 3.3 follows by Lemma 2.3 and the same procedure argued in
the proof of Theorem 3.1.
\end{proof}

\begin{corollary}
Let $0<\alpha <n,$ $1<p<\frac{n}{\alpha },$ $\frac{1}{q}=\frac{1}{p}-\frac{%
\alpha }{n},b\in BMO\left( 
\mathbb{R}
^{n}\right) $, $w\in A_{p,q}$ and $\varphi \in B(w^{p})$ and $\psi \in
B(w^{q})$. If $\varphi $ and $\psi $ satisfy conditions (3.4) and (3.5) then 
$M_{\alpha ,b}$ is bounded from $VL_{\Pi }^{p,\varphi }\left( w^{p}\right) $
to $VL_{\Pi }^{q,\psi }\left( w^{q}\right) .$
\end{corollary}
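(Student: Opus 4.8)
The plan is to deduce this Corollary from Theorem 3.3 (equivalently, from the local estimate of Lemma 2.3) by means of a pointwise domination of the maximal commutator, in exact parallel with the way Theorem 3.1 handled $M_\alpha$: there the reduction to $I_\alpha$ rested on the elementary pointwise bound $M_\alpha f(x)\le I_\alpha(|f|)(x)$. Here I would look for the analogous bound at the level of commutators, namely a nonnegative operator with the same kernel behaviour as $I_{\alpha,b}$ that dominates $|M_{\alpha,b}f|$ pointwise. Concretely, I would introduce the \emph{maximal commutator}
\begin{equation*}
M_{\alpha,b}^{\ast}f(x):=\sup_{r>0}\frac{1}{r^{n-\alpha}}\int_{B(x,r)}\left\vert b(x)-b(y)\right\vert \left\vert f(y)\right\vert dy
\end{equation*}
and the positive (sublinear) fractional integral commutator
\begin{equation*}
\widetilde{I}_{\alpha,b}g(x):=\int_{\mathbb{R}^{n}}\frac{\left\vert b(x)-b(y)\right\vert }{\left\vert x-y\right\vert ^{n-\alpha}}g(y)\,dy .
\end{equation*}

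The key pointwise step I would establish is the chain $\left\vert M_{\alpha,b}f(x)\right\vert \le M_{\alpha,b}^{\ast}f(x)\le \widetilde{I}_{\alpha,b}(|f|)(x)$. The second inequality is immediate, since for each fixed $r$ the factor $r^{-(n-\alpha)}$ is dominated by $|x-y|^{-(n-\alpha)}$ on $B(x,r)$ and one then enlarges the domain of integration to all of $\mathbb{R}^n$. For the first inequality I would write $M_{\alpha,b}f(x)=b(x)M_{\alpha}f(x)-M_{\alpha}(bf)(x)$ as a difference of two suprema of nonnegative quantities indexed by $r$, and use $\left\vert \sup_r A_r-\sup_r B_r\right\vert \le \sup_r\left\vert A_r-B_r\right\vert$ together with the triangle inequality $\big\vert b(x)|f(y)|-|b(y)f(y)|\big\vert \le |b(x)-b(y)||f(y)|$ inside each average. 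The main point requiring care here is the sign bookkeeping for $b(x)$ (the argument must bound $M_{\alpha,b}f$ both above and below by $\pm M_{\alpha,b}^{\ast}f$), but this is routine and self-contained.

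Next I would observe that the local estimate (2.5) of Lemma 2.3 in fact holds verbatim for $\widetilde{I}_{\alpha,b}$: its proof proceeds through the near/far decomposition $f=f\chi_{2B}+f\chi_{(2B)^{c}}$ and estimates the kernel by $|b(x)-b(y)|\,|x-y|^{-(n-\alpha)}$, so no cancellation in the sign of $b(x)-b(y)$ is ever exploited. Consequently, combining this with the pointwise domination above yields
\begin{equation*}
\left\Vert M_{\alpha,b}f\right\Vert _{L^{q,w^{q}}\left( B(x,r)\right) }\lesssim \left\Vert b\right\Vert _{\ast }\left\Vert w\right\Vert _{L^{q}\left( B(x,r)\right) }\int_{2r}^{\infty }\ln\!\left( e+\frac{t}{r}\right) \left\Vert f\right\Vert _{L^{p,w^{p}}\left( B(x,t)\right) }\left\Vert w\right\Vert _{L^{q}\left( B(x,t)\right) }^{-1}\frac{dt}{t},
\end{equation*}
which is precisely the estimate used to prove Theorem 3.3.

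Finally, with this local inequality in hand, the passage to the vanishing generalized weighted Morrey norm is identical to the argument already carried out for Theorem 3.1 and invoked for Theorem 3.3: splitting the integral over $(r,\infty)$ at a fixed $\delta_0$, one controls the near part $A_{\delta_0}(x,r)$ uniformly for $0<r<\delta_0$ using the vanishing property of $f$, controls the far part $B_{\delta_0}(x,r)$ by $c_\delta$ via the logarithmic analogue of (3.1), i.e. condition (3.4), and then uses $\psi\in\mathcal{B}(w^{q})$ to make the whole expression smaller than $\varepsilon$; the finiteness of the norm follows from condition (3.5). I expect the only genuine obstacle to be the verification of the pointwise inequality $\left\vert M_{\alpha,b}f\right\vert \le \widetilde{I}_{\alpha,b}(|f|)$ with its sign considerations, since once that domination and the transfer of (2.5) to $\widetilde{I}_{\alpha,b}$ are secured, the proof reduces word-for-word to that of Theorem 3.3.
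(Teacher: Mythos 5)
The paper states this corollary without any proof, so there is nothing of the authors' to compare your argument to; judged on its own merits, your plan has a genuine gap at its very first step, the pointwise domination $\left\vert M_{\alpha ,b}f(x)\right\vert \leq M_{\alpha ,b}^{\ast }f(x)$. For the commutator as this paper defines it, $M_{\alpha ,b}f=b\,M_{\alpha }f-M_{\alpha }(bf)$, that inequality is simply false once $b$ takes negative values. Take $b\equiv -1$: then $M_{\alpha }(bf)=M_{\alpha }(-f)=M_{\alpha }f$, so $M_{\alpha ,b}f=-2M_{\alpha }f\neq 0$, while $M_{\alpha ,b}^{\ast }f\equiv 0$ because $b(x)-b(y)\equiv 0$. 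The same example exposes the two places where your "routine sign bookkeeping" breaks: the identity $b(x)\sup_{r}A_{r}=\sup_{r}\left( b(x)A_{r}\right) $ that you need in order to apply $\left\vert \sup_{r}A_{r}-\sup_{r}B_{r}\right\vert \leq \sup_{r}\left\vert A_{r}-B_{r}\right\vert $ holds only for $b(x)\geq 0$, and the "triangle inequality" $\bigl\vert b(x)\left\vert f(y)\right\vert -\left\vert b(y)f(y)\right\vert \bigr\vert \leq \left\vert b(x)-b(y)\right\vert \left\vert f(y)\right\vert $ fails when $b(x)$ and $b(y)$ are both negative, since the left side equals $\bigl\vert b(x)-\left\vert b(y)\right\vert \bigr\vert \left\vert f(y)\right\vert $. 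This is not a fixable bookkeeping issue but a known structural obstruction: unlike $I_{\alpha ,b}$, the operator $f\mapsto bM_{\alpha }f-M_{\alpha }(bf)$ does not annihilate negative constants, and its boundedness genuinely requires information about the negative part of $b$ beyond $b\in BMO$ (for $\alpha =0$ this is the Bastero--Milman--Ruiz phenomenon for $[b,M]$).

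Everything downstream of that step is sound: $M_{\alpha ,b}^{\ast }f\leq \widetilde{I}_{\alpha ,b}(\left\vert f\right\vert )$ is correct since $r^{-(n-\alpha )}\leq \left\vert x-y\right\vert ^{-(n-\alpha )}$ on $B(x,r)$; the estimate $(2.5)$ does transfer to the positive operator $\widetilde{I}_{\alpha ,b}$ because its proof only ever uses the pointwise bound by the kernel $\left\vert b(x)-b(y)\right\vert \left\vert x-y\right\vert ^{-(n-\alpha )}$; and the passage to the vanishing norms is word-for-word the argument of the preceding theorems. So what you have actually proved is the corollary for the \emph{maximal commutator} $M_{\alpha ,b}^{\ast }$ --- which is how several sources, implicitly including \cite{guli2012}, interpret the symbol $M_{\alpha ,b}$ --- or for the paper's $M_{\alpha ,b}$ under the extra hypothesis $b\geq 0$ a.e. To close the gap you must either restate the result for $M_{\alpha ,b}^{\ast }$, or add an assumption controlling $b^{-}$; as written, the pointwise reduction on which your whole plan rests does not hold.
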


\begin{example}
If $w$ is a weight and there exists two positive constants $C$ and $D$ such
that $C\leq w(x)\leq D$ for a.e. $x\in 
\mathbb{R}
^{n}$, then obviously $w\in A_{p,q}$ for $1<p<q<\infty .$ Let $0<\alpha <n,$ 
$1<p<\frac{n}{\alpha }$ and $0<\lambda <n-\alpha p.$ Moreover, let $\frac{1}{%
q}=\frac{1}{p}-\frac{\alpha }{n},$ $\varphi (x,r)=r^{\lambda }$ and $\psi
(x,r)=r^{\frac{\lambda q}{p}}.$ Then $M_{\alpha }$ and $I_{\alpha }$ are
bounded from the vanishing space $VL_{\Pi }^{p,\varphi }\left( 
\mathbb{R}
^{n};w^{p}\right) $ to another vanishing space $VL_{\Pi }^{q,\psi }\left( 
\mathbb{R}
^{n};w^{q}\right) .$
\end{example}

\begin{example}
Let $0<\alpha <n,$ $1<p<\frac{n}{\alpha }$ and $0<\lambda <n-\alpha p.$
Moreover, let $\frac{1}{q}=\frac{1}{p}-\frac{\alpha }{n},$ $\varphi
(x,r)=r^{\lambda }$ and $\psi (x,r)=r^{\frac{\lambda q}{p}}.$ Suppose that $%
v(y)=\left\vert y\right\vert ^{\beta }.$ Then $v\in A_{p}$, $1<p<\infty ,$
if $0<\beta <n(p-1).$ From Lemma 2.1 we have $w(y)=\left\vert y\right\vert ^{%
\frac{\beta }{q}}\in A_{p,q}$, $1<p<q<\infty $ if $0<\beta <n\frac{q}{%
p^{\prime }}$. Thus we can obtain the following:

\begin{equation*}
\left\Vert w\right\Vert _{L^{q}(B(x,r))}\approx \left\{ 
\begin{array}{ccc}
r^{\frac{n+\beta }{q}} & \text{when} & \left\vert x\right\vert <3r \\ 
r^{\frac{n}{q}}(\left\vert x\right\vert +r)^{\frac{\beta }{q}} & \text{when}
& \left\vert x\right\vert \geq 3r%
\end{array}%
\right. .
\end{equation*}

If $\left\vert x\right\vert <3r,$ then we get for the left hand side of (3.2)

\begin{equation*}
\dint\limits_{r}^{\infty }\frac{\varphi ^{\frac{1}{p}}\left( x,t\right) }{%
\left\Vert w\right\Vert _{L^{q}\left( B\left( x,t\right) \right) }}\frac{dt}{%
t}\lesssim r^{\frac{\lambda }{p}-\frac{n+\beta }{q}}
\end{equation*}

since $\frac{\lambda }{p}-\frac{n+\beta }{q}<0.$ On the other hand, for the
right hand side of the inequality (3.2), we have

\begin{equation*}
\frac{\psi ^{\frac{1}{q}}\left( x,r\right) }{\left\Vert w\right\Vert
_{L^{q}\left( B\left( x,r\right) \right) }}\approx r^{\frac{\lambda }{p}-%
\frac{n+\beta }{q}}.
\end{equation*}

If $\left\vert x\right\vert \geq 3r,$ then

\begin{equation*}
\dint\limits_{r}^{\infty }\frac{\varphi ^{\frac{1}{p}}\left( x,t\right) }{%
\left\Vert w\right\Vert _{L^{q}\left( B\left( x,t\right) \right) }}\frac{dt}{%
t}\lesssim \frac{r^{\frac{\lambda }{p}-\frac{n}{q}}+\left\vert x\right\vert
^{\frac{\lambda }{p}-\frac{n}{q}}}{\left\vert x\right\vert ^{\frac{\beta }{q}%
}}
\end{equation*}

and

\begin{equation*}
\frac{\psi ^{\frac{1}{q}}\left( x,r\right) }{\left\Vert w\right\Vert
_{L^{q}\left( B\left( x,r\right) \right) }}\approx \frac{r^{\frac{\lambda }{p%
}-\frac{n}{q}}}{(\left\vert x\right\vert +r)^{\frac{\beta }{q}}}.
\end{equation*}

Obviously

\begin{equation*}
\frac{r^{\frac{\lambda }{p}-\frac{n}{q}}+\left\vert x\right\vert ^{\frac{%
\lambda }{p}-\frac{n}{q}}}{\left\vert x\right\vert ^{\frac{\beta }{q}}}%
\lesssim \frac{r^{\frac{\lambda }{p}-\frac{n}{q}}}{(\left\vert x\right\vert
+r)^{\frac{\beta }{q}}},\qquad \text{when }\left\vert x\right\vert \geq 3r.
\end{equation*}

Furthermore we have%
\begin{equation*}
c_{\delta }\approx \delta ^{\frac{\lambda }{p}-\frac{n+\beta }{q}}.
\end{equation*}

Therefore $M_{\alpha }$ and $I_{\alpha }$ are bounded from the vanishing
space $VL_{\Pi }^{p,\varphi }\left( 
\mathbb{R}
^{n};w^{p}\right) $ to another vanishing space $VL_{\Pi }^{q,\psi }\left( 
\mathbb{R}
^{n};w^{q}\right) .$
\end{example}

$\hspace{-0.1cm}^{a}$Department of Mathematics,

Faculty of Science, Dicle University,

21280 Diyarbakir, Turkey

\smallskip

$\hspace{-0.1cm}^{b}$Department of Mathematics,

Institute of Natural and Applied Sciences,

Dicle University,

21280 Diyarbakir, Turkey


\begin{thebibliography}{99}
\bibitem{Adams} D. R. Adams, A note on Riesz potentials, Duke Math. J., 42
(1975), 765-778.

\bibitem{Adams2} D. R. Adams, Morrey spaces, Lecture Notes in Applied and
Numerical Harmonic Analysis, Birkh\"{a}user, 2015.

\bibitem{guli2012} V. S. Guliyev, Generalized weighted Morrey spaces and
higher order commutators of sublinear operators. Eurasian Math. J. 3,
(2012), 33-61.

\bibitem{Guliyev3} V. S. Guliyev, T. Karaman, R. Ch. Mustafayev, A. \c{S}%
erbet\c{c}i, Commutators of sublinear operators generated by Calder\'{o}%
n-Zygmund operator on generalized Morrey space, Czech. Math. J. 64 (139),
(2014), 365-386.

\bibitem{Guliyev4} V. S. Guliyev, P. S. Shukurov, On the boundedness of the
fractional maximal operator, Riesz potential and their commutators in
generalized Morrey spaces, in: Oper. Theory Adv. Appl., vol. 229, 2013, pp.
175--194.

\bibitem{Komori} Y. Komori, S. Shirai, Weighted Morrey spaces and a singular
integral operator, Math. Nachr. 282, No.2, (2009), 219-231.

\bibitem{Lu} S. Z. Lu, Y. Ding, and D. Y. Yan, Singular Integrals and
Related Topics, World Scientific Publishing, River Edge, NJ, USA, 2007.

\bibitem{Mizuhara} T. Mizuhara, Boundedness of some classical operators on
Morrey space, In. Igari, S. (ed.) Harmonic Analysis, ICM 90 Satellite
Proceedings, 183-189, Springer, Tokyo (1991).

\bibitem{Morrey} C. B. Morrey, On the solutions of quasi-linear elliptic
partial differential equations, Trans. Amer. Math. Soc., 43 (1938), 126-166.

\bibitem{Muc1} B. Muckenhoupt, Weighted norm inequalities for the Hardy
maximal function, Trans. Amer. Math. Soc. 165, (1972), 207-226.

\bibitem{Muc2} B. Muckenhoupt, R. L. Wheeden, Weighted norm inequalities for
fractional integrals, Trans. Amer. Math. Soc. 192, (1974), 261-274.

\bibitem{Persson} L. E. Persson, M. A. Ragusa, N. Samko, P. Wall,
Commutators of Hardy operators in vanishing Morrey spaces. AIP Conf. Proc.
1493, 859 (2012); http://dx.doi.org/10.1063/1.4765588.

\bibitem{Rafe} H. Rafeiro, N. Samko and S. Samko, Morrey-Campanato Spaces:
an Overview, in: Operator Theory, Pseudo-Differential Equations, and
Mathematical Physics, vol. 228 of Operator Theory: Advances and
Applications. Birkh\"{a}user, pp. 293--324, 2013.

\bibitem{Ragusa} M. A. Ragusa, Commutators of fractional integral operators
on vanishing-Morrey spaces, J. Global Optim. 368 (40) (2008), 1-3.

\bibitem{Ragusa 2} G. Di Fazio, M. A. Ragusa, Commutators and Morrey spaces,
Boll. U.M.I. 75-A (1991), 323-332.

\bibitem{Samko} N. Samko, Maximal, Potential and Singular Operators in
vanishing generalized Morrey Spaces, J. Global Optim. 2013, 1-15. DOI
10.1007/s10898-012-9997-x.

\bibitem{Vitanza} C. Vitanza, Functions with vanishing Morrey norm and
elliptic partial differential equations. In: Proceedings of Methods of Real
Analysis and Partial Differential Equations, Capri, pp.147-150. Springer
(1990).
\end{thebibliography}
\end{document}